\newcommand{\dR}{\mathbb{R}}
\newcommand{\cA}{\mathcal{A}}
\newcommand{\cB}{\mathcal{B}}
\newcommand{\cC}{\mathcal{C}}
\newcommand{\cH}{\mathcal{H}}
\newcommand{\cN}{\mathcal{N}}
\newcommand{\cT}{\mathcal{T}}
\newcommand{\cZ}{\mathcal{Z}}
\newcommand{\cF}{\mathcal{F}}
\newcommand{\rI}{\mathrm{I}}
\newcommand{\rJ}{\mathrm{J}}
\newcommand{\cM}{<\!\!M\!\!>}
\newcommand{\veps}{\varepsilon}
\newcommand{\wh}{\widehat}
\newcommand{\ind}{\mbox{1}\kern-.25em \mbox{I}}
\font\calcal=cmsy10 scaled\magstep1
\def\build#1_#2^#3{\mathrel{\mathop{\kern 0pt#1}\limits_{#2}^{#3}}}
\def\liml{\build{\longrightarrow}_{}^{{\mbox{\calcal L}}}}
\def\videbox{\mathbin{\vbox{\hrule\hbox{\vrule height1ex \kern.5em
\vrule height1ex}\hrule}}}
\def\demend{\hfill $\videbox$\\}
\numberwithin{equation}{section}
\theoremstyle{plain}
\newtheorem{thm}{Theorem}[section]
\newtheorem{rem}{Remark}[section]
\newtheorem{cor}{Corollary}[section]
\newtheorem{lem}{Lemma}[section]
\gdef\beginguillemets{\leavevmode\raise0.3ex%
         \hbox{{$\scriptscriptstyle\langle\!\langle\,$}\nobreak\ignorespaces}} 
\gdef\beginguillemets{\leavevmode\hbox{\ly(\kern-0.20em(\kern+0.20em}\nobreak} 
\gdef\endguillemets{\ifdim\lastskip>\z@\unskip\penalty\@M\fi 
         \leavevmode\raise0.3ex%
         \hbox{{$\scriptscriptstyle\,\rangle\!\rangle$}}} 
\gdef\endguillemets{\nobreak\leavevmode\hbox{\kern+0.20em\ly)\kern-0.20em)}} 
\newcommand{\og}{{\scriptscriptstyle\langle\!\langle\,}} 
\newcommand{\fg}{{\scriptscriptstyle\,\rangle\!\rangle}}
\keywords{Durbin-Watson statistic, Estimation, Adaptive tracking control, Persistent excitation, Almost sure convergence, 
Asymptotic normality, Statistical test for serial autocorrelation.}
\subjclass[2000]{Primary: 62G05 Secondary: 93C40, 62F05, 60F15, 60F05}
\begin{document}
\title[A Durbin-Watson serial correlation test for ARX processes]
{A Durbin-Watson serial correlation test for ARX processes via excited adaptive tracking}
\author{Bernard Bercu}
	\address{Universit\'e de Bordeaux, Institut de Math\'ematiques de Bordeaux,
	UMR 5251, 351 cours de la lib\'eration, 33405 Talence cedex, France.}

\author{Bruno Portier}
\address{Normandie Universit\'e, D\'epartement de G\'enie Math\'ematiques, Laboratoire de Math\'ematiques, 
INSA de Rouen, LMI-EA 3226, place Emile Blondel, BP 08, 76131 Mont-Saint-Aignan cedex, France}

\author{Victor Vazquez}
\address{Universidad Aut\'onoma de Puebla, Facultad de Ciencias F\'isico
Matem\'aticas, Avenida San Claudio y Rio Verde, 72570 Puebla, Mexico.}

\begin{abstract}
We propose a new statistical test for the residual autocorrelation in ARX adaptive tracking. 
The introduction of a persistent excitation in the adaptive tracking control allows us
to build a bilateral statistical test based on the well-known Durbin-Watson statistic.
We establish the almost sure convergence and the asymptotic normality for the 
Durbin-Watson statistic leading to a powerful serial correlation test.
Numerical experiments illustrate the good performances of our statistical test procedure.
\end{abstract}

\maketitle

\vspace{-1cm}

\section{Introduction}
\label{SectionIM}
\vspace{1ex}

Model validation is an important and essential final step in the identification of stochastic dynamical systems.
This validation step is often done through the analysis of residuals of the model considered.
In particular, testing the non-correlation of the residuals is a crucial task since many theoretical results 
require independence of the driven noise of the systems. Moreover, non-compliance with this hypothesis
can lead to misinterpretation of the theoretical results.
For example, it is well known that for linear autoregressive models with autocorrelated residuals,
the least squares estimator is asymptotically biased, see e.g. \cite{BPDW2012}, \cite{D1970}, \cite{N1966},
\cite{Proia2013}, and therefore the estimated model is not the correct one.
Consequently, to ensure a good interpretation of the results,
it is necessary to have a powerful tool allowing to detect the possible autocorrelation
of the residuals.
The well-known statistical test of Durbin-Watson was introduced to deal with this question, and more specifically, 
for detecting the presence of a first-order autocorrelated noise in linear regression models
\cite{DW1}, \cite{DW2}, \cite{DW3}, \cite{D1970}, firstly
and for linear autoregressive models \cite{BPDW2012}, \cite{N1966}, 
\cite{SRI1987}, \cite{STO2007}, \cite{Proia2013}, secondly.
\vspace{1ex}

To the best of our knowledge, no such serial correlation statistical test 
is available for controlled autoregressive processes.
The aim of this paper is to carry out a serial correlation test, based on the Durbin-Watson statistic, for the ARX$(p,1)$ process given, for all $n\geq 0$, by
\begin{equation}\label{ARXPintro}
X_{n+1}=\sum_{k=1}^p\theta_k X_{n-k+1}+U_n+\varepsilon_{n+1}
\end{equation}
where the driven noise $(\varepsilon_{n})$ is given by the first-order autoregressive process
\begin{equation}\label{CORintro}
\varepsilon_{n+1}= \rho\,\varepsilon_{n}+V_{n+1}
\end{equation}
and the control objective is the tracking of a given reference trajectory.
More precisely, we shall propose a bilateral statistical test allowing to decide between the null hypothesis ${\mathcal H}_0:\og \rho = 0\fg$ 
which ensures that the driven noise is not correlated,
and the alternative one
${\mathcal H}_1:\og \rho \neq 0\fg$
which means that the residual process is effectively first-order autocorrelated.
The choice of the Durbin-Watson statistic, instead of any other statistical tests, 
is governed by its efficienty for autoregressive processes without control, see \cite{BPDW2012}, \cite{D1970}, \cite{N1966}, \cite{Proia2013}.
\ \vspace{1ex} \par
In contrast with the recent work  \cite{BPV2012}, we propose to make use of a different strategy via a modification of the adaptive control law. This modification relies on the introduction
of an additional persistent excitation. Since the pioneering works of Anderson \cite{AND1982} and Moore \cite{M1983}, the concept of persistent excitation has been successfully developped 
in many fields of applied mathematics such as identification of complex systems,
feedback adaptive control, etc. 
While it was not possible in  \cite{BPV2012} to test the non correlation 
of the driven noise $(\varepsilon_{n})$, that is to test whether or not $\rho = 0$, 
the introduction of an additional persistent excitation term in the control law 
will be the key point to build our serial correlation test.
Moreover, we wish to mention that all previous works devoted to non correlation test based 
on the Durbin-Watson statistic were only related to uncontrolled processes.
Therefore, thanks to the persistent excitation, our statistical test is at our knowledge 
the first one in the context of linear processes with adaptive control.
\vspace{1ex}

The paper is organized as follows. Section 2 is devoted to the ARX process and to the persistently excited adaptive control law.
In Section 3, we establish the asymptotic properties of the Durbin-Watson statistic as well as a bilateral 
statistical test for residual autocorrelation. Some numerical experiments are provided in Section 4. 
Finally, all technical proofs are postponed in the Appendices.

\section{Model and excited adaptive tracking}
\label{SectionEC}

We focus our attention on the $\text{ARX}(p,1)$ process given, for all $n\geq 0$, by
\begin{equation} \label{ARXP}
X_{n+1}=\sum_{k=1}^p\theta_k X_{n-k+1}+U_n+\varepsilon_{n+1}
\end{equation}
where the driven noise $(\varepsilon_{n})$ is given by the first-order autoregressive process
\begin{equation} \label{COR}
\varepsilon_{n+1}= \rho\,\varepsilon_{n}+V_{n+1}.
\end{equation}
We assume that the autocorrelation parameter satisfies $|\rho|<1$ and the initial values $X_0$, 
$\varepsilon_0$ and $U_0$ may be arbitrarily chosen. We also assume that $(V_n)$ is a martingale difference
sequence adapted to the filtration $\mathbb{F} = (\mathcal{F}_n)$ where $\mathcal{F}_n$ is
the $\sigma$-algebra of the events occurring up to time $n$, such that, for all $n \geq 0$, 
$\mathbb{E}\left[V_{n+1}^2|\mathcal{F}_n\right]=\sigma^2$ a.s. 
with $\sigma^2>0$.  We denote by $\theta$ the unknown parameter of the $\text{ARX}(p,1)$ process,
$$\theta^{t}  = (\theta_1,  \ldots, \theta_p).$$

Our control strategy is to regulate the dynamic of the process 
$(X_n)$ by forcing $X_n$ to track a bounded reference
trajectory $(x_n)$. We assume that $(x_n)$ is predictable which means that
for all $n\geq 1$, $x_n$ is $\cF_{n-1}$-measurable.
For the sake of simplicity, we also assume that
\begin{equation}  \label{CT}
\sum_{k=1}^{n} x_{k}^{2} =o(n) \hspace{1cm} \text{a.s.}
\end{equation}

In order to regulate the dynamic of the process $(X_n)$ given by (\ref{ARXP}), we propose to
make use of the adaptive control law introduced in \cite{BPV2012} together with additional persistent excitation. The strategy consists of using a control associated with a higher 
order model than the initial $\text{ARX}(p,1)$, and more precisely an $\text{ARX}(p+1,2)$ model.
The introduction of an additional excitation in the control law
will be the key point to build our serial correlation test 
for the driven noise  $(\varepsilon_{n})$, that is to test wether
or not $\rho = 0$. Denote by $(\xi_n)$ a centered exogenous noise
with known variance $\nu^2 > 0$, which will play the role of the additional excitation.
We assume that $(\xi_n)$ is independent of $(V_n)$, of $(x_n)$ and of the initial state of the system.
One can observe that these assumptions are not at all restrictive as we have in our own hands
the additional excitation $(\xi_n)$
\vspace{1ex}

The excited adaptive control law is given, for all $n \geq 0$, by
\begin{equation} \label{CONTROL}
U_n = x_{n+1}-\wh{\vartheta}_n^{\,t}\,\Phi_n+\xi_{n+1}
\end{equation}
where $\wh{\vartheta}_n$ stands for the least squares estimator of the unknown parameter
of the $\text{ARX}(p+1,2)$ model with uncorrelated driven noise
\begin{equation}
\label{EQFONDA}
X_{n+1}=\vartheta^t\Phi_n+U_n+V_{n+1}
\end{equation}
where the new parameter $\vartheta \in \dR^{p+2}$ is related to $\theta$ and $\rho$ by the identity
\begin{equation}
\label{NEWPARA}
\vartheta = 
\begin{pmatrix}
\  \theta \ \\
\ 0 \ \\
0
\end{pmatrix}
-
\rho
\begin{pmatrix}
 -1 \ \\
\ \, \theta \ \\
\ 1
\end{pmatrix}
\end{equation}
and the new regression vector is given by
$$\Phi_n^{t} = (X_n, \ldots, X_{n-p}, U_{n-1}).$$ 
It is well-known that $\wh{\vartheta}_n$ satisfies the recursive relation
\begin{equation}  
\label{LSVARTHETA}
\wh{\vartheta}_{n+1}=
\wh{\vartheta}_{n}+S_{n}^{-1}\Phi_{n}
\Bigl(X_{n+1}-U_{n}-\wh{\vartheta}_{n}^{\, t}\Phi_n\Bigr)
\end{equation}
where the initial value $\wh{\vartheta}_{0}$ may be arbitrarily chosen and 
\begin{equation}\label{DEFSN}
S_{n}=\sum_{k=0}^{n}\Phi_{k}\Phi_{k}^{t}+\rI_{p+2}.
\end{equation}
As usual, the identity matrix $\rI_{p+2}$ is added in order
to avoid useless invertibility assumption. 
One can immediately see from \eqref{NEWPARA}  that the last component of
the vector $\vartheta$ is $-\rho$. 
Consequently, we obtain an estimator of $\rho$ by simply taking the opposite of the last coordinate of  $\wh{\vartheta}_n$
which will be denoted by  $\wh{\rho}_n$.
In addition, one can also deduce from \eqref{NEWPARA} that
\begin{equation}\label{INIPARA}
\begin{pmatrix}\ \theta \ \\ \ \rho \ \end{pmatrix}
 = \Delta\, \vartheta
\end{equation}
where $\Delta$ is the rectangular matrix of size $(p+1)\!\times\!(p+2)$ given by
\begin{equation}
\label{DEFDELTA}
\Delta=
\begin{pmatrix}
1 & 0 & \cdots & \cdots & \cdots & 0 & 1 \\ 
\rho & 1 & 0 & \cdots & \cdots & 0 & \rho \\ 
\rho^2 & \rho & 1 & 0 & \cdots & 0 & \rho^2 \\ 
\cdots & \cdots & \cdots & \cdots & \cdots & \cdots & \cdots \\ 
\rho^{p-1} & \rho^{p-2} & \cdots & \rho & 1 & 0 & \rho^{p-1} \\ 
0 & 0 & \cdots & \cdots & \cdots & 0 & -1
\end{pmatrix}.
\end{equation}
Then, starting from (\ref{INIPARA}) and replacing $\rho$ by $\wh{\rho}_n$ in \eqref{DEFDELTA},
we can estimate $\theta$ by
\begin{equation}
\label{THETACHAP}
\wh{\theta}_n\ = \
 \Bigl(\rI_{p}\ \ 0 \Bigr) \wh{\Delta}_{n} \, \wh{\vartheta}_n
\end{equation}
where $\wh{\vartheta}_n$ is given by \eqref{LSVARTHETA} and
\begin{equation}
\label{DEFDELTAN}
\wh{\Delta}_{n}=
\begin{pmatrix}
1 & 0 & \cdots & \cdots & \cdots & 0 & 1 \\ 
\wh{\rho}_{n} & 1 & 0 & \cdots & \cdots & 0 & \wh{\rho}_{n} \\ 
\wh{\rho}_{n}^{\,2} & \wh{\rho}_{n} & 1 & 0 & \cdots & 0 & \wh{\rho}_{n}^{\,2}  \\ 
\cdots & \cdots & \cdots & \cdots & \cdots & \cdots & \cdots \\ 
\wh{\rho}_{n}^{\,p-1}  & \wh{\rho}_{n}^{\,p-2} & \cdots & \wh{\rho}_{n} & 1 & 0 & \wh{\rho}_{n}^{\,p-1}  \\ 
0 & 0 & \cdots & \cdots & \cdots & 0 & -1
\end{pmatrix}
.
\end{equation}
From the almost sure convergence of $\wh{\vartheta}_n$ to $\vartheta$, we easily deduce the almost sure convergences of
$\wh{\theta}_n$ and $\wh{\rho}_n$ to $\theta$ and $\rho$, respectively.

\section{A Durbin-Watson serial correlation test}
\def\Hzero{{\mathcal H}_0} 
\def\Hone{{\mathcal H}_1} 

We are in the position to introduce our serial correlation test based on
the Durbin-Watson statistic which is certainly the most commonly used statistics for
testing the presence of serial autocorrelation. Our goal is to test
$$\Hzero\,:\,\og \rho = 0\fg \hspace{1cm} \mbox{\rm vs} \hspace{1cm} \Hone\,:\,\og \rho \neq 0\fg.$$ 
For that purpose, we consider the Durbin-Watson statistic \cite{BPDW2012}, \cite{DW1}, \cite{DW2}, \cite{DW3}, \cite{D1970} given, for all $n\geq 1$, by 
\begin{equation} \label{DEFDN}
\widehat{D}_n=\frac{\sum_{k=1}^{n}
\left(\widehat{\varepsilon}_k-\widehat{\varepsilon}_{k-1}\right)^2}{\sum_{k=0}^{n}\widehat{\varepsilon}_{k}^{\,2}}
\end{equation}
where the residuals $\widehat{\varepsilon}_{k}$ are defined, for all $0\leq k\leq n$, by
\begin{equation}\label{RESIDUALN}
\widehat{\varepsilon}_{k}=X_k-U_{k-1}-\wh{\theta}_n^{\,t}\varphi_{k-1}
\end{equation}
with $\wh{\theta}_{n}$ given by \eqref{THETACHAP} and $\varphi_n^t = (X_n, \ldots, X_{n-p+1})$.
The initial value $\widehat{\varepsilon}_{0}$ may be arbitrarily chosen and we take $\widehat{\varepsilon}_{0}=X_{0}$. 
\ \vspace{1ex} \par
On the one hand, we would like to emphasize that it is not possible
to perform this statistical test if the control law is not persistently excited \cite{BPV2012}. On the other hand, one can notice
that it is also possible to estimate the serial correlation parameter $\rho$ 
by the least squares estimator
\begin{equation} \label{NEWLSRHO}
\overline{\rho}_n=\frac{\sum_{k=1}^{n}\widehat{\varepsilon}_k\widehat{\varepsilon}_{k-1}}{\sum_{k=1}^{n}\widehat{\varepsilon}_{k-1}^{\,2}}
\end{equation}
which is certainly the more natural estimator of $\rho$.
The Durbin-Watson statistic $\wh{D}_n$ is related to $\overline{\rho}_n$ by the linear relation
\begin{equation}
\label{RELRHODN}
\wh{D}_n=2(1-\overline{\rho}_n)+\zeta_n
\end{equation}
where the remainder $\zeta_n$ plays a negligeable role. 
The almost sure properties of $\wh{D}_n$ and $\rho_n$ are as follows.

\begin{thm}
\label{T-ASCVGDW}
Assume $(V_n)$ has a finite conditional moment of order $>2$. Then, $\overline{\rho}_n$ converges almost surely to $\rho$
\begin{equation}  
\label{ASCVGNRHO}
( \overline{\rho}_{n}-\rho)^{2}= \mathcal{O} \left( \frac{\log n}{n} \right) 
\hspace{0.5cm}\text{a.s.}
\end{equation}
In addition, $\wh{D}_{n}$ converges almost surely to $D=2(1-\rho)$.
Moreover, if $(V_n)$ has a finite conditional moment of order $>4$, we also have
\begin{equation}  
\label{ASCVGDW} 
\Bigl( \wh{D}_{n}-D \Bigr)^{2}= \mathcal{O} \left( \frac{\log n}{n} \right) 
\hspace{0.5cm}\text{a.s.}
\end{equation}
\end{thm}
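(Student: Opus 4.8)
The plan is to reduce everything to the behaviour of the least squares estimators $\wh\vartheta_n$ and the excitation-driven growth of the design matrix $S_n$. First I would record the basic asymptotic facts coming from the persistently excited adaptive control: because the control law \eqref{CONTROL} injects the exogenous noise $\xi_{n+1}$ with variance $\nu^2>0$ and $(\xi_n)$ is independent of $(V_n)$ and $(x_n)$, the smallest eigenvalue of $S_n$ grows linearly, $\lambda_{\min}(S_n)\geq cn$ a.s. for some $c>0$, while $\lambda_{\max}(S_n)=\mathcal{O}(n)$ a.s. (the latter using \eqref{CT} and the boundedness of the reference trajectory). Standard least squares theory for stochastic regression (the Lai–Wei strong consistency bounds, valid here since $(V_n)$ has a finite conditional moment of order $>2$) then gives $\|\wh\vartheta_n-\vartheta\|^2=\mathcal{O}(\log n/n)$ a.s., and since $\wh\rho_n$ is a fixed linear functional (the last coordinate) of $\wh\vartheta_n$, also $(\wh\rho_n-\rho)^2=\mathcal{O}(\log n/n)$ a.s.; likewise $\|\wh\theta_n-\theta\|^2=\mathcal{O}(\log n/n)$ a.s. via \eqref{THETACHAP}, since $\wh\Delta_n$ depends smoothly on $\wh\rho_n$.

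Next I would analyse the residuals \eqref{RESIDUALN}. Writing $\wh\varepsilon_k = X_k-U_{k-1}-\wh\theta_n^{\,t}\varphi_{k-1}$ and comparing with the true relation $X_k = \theta^t\varphi_{k-1}+U_{k-1}+\varepsilon_k$, one gets the decomposition $\wh\varepsilon_k = \varepsilon_k + (\theta-\wh\theta_n)^t\varphi_{k-1}$. Hence
\begin{equation*}
\sum_{k=1}^n \wh\varepsilon_k\wh\varepsilon_{k-1} = \sum_{k=1}^n \varepsilon_k\varepsilon_{k-1} + R_n,
\qquad
\sum_{k=1}^n \wh\varepsilon_{k-1}^{\,2} = \sum_{k=1}^n \varepsilon_{k-1}^{\,2} + R_n',
\end{equation*}
where the remainders $R_n,R_n'$ are bilinear in $(\theta-\wh\theta_n)$ and in sums of $\varphi_{k-1}$, $\varepsilon_{k-1}$; by Cauchy–Schwarz they are bounded by $\|\theta-\wh\theta_n\|$ times $\mathcal{O}(n)$ quantities, i.e. $\mathcal{O}(\sqrt{n\log n})$ a.s. Since $\varepsilon_n = \rho\varepsilon_{n-1}+V_n$ is a stable AR(1), the strong law of large numbers for martingale transforms gives $\frac1n\sum \varepsilon_{k-1}^{\,2}\to \sigma^2/(1-\rho^2)$ and $\frac1n\sum \varepsilon_k\varepsilon_{k-1}\to \rho\sigma^2/(1-\rho^2)$ a.s. Dividing, $\overline\rho_n\to\rho$ a.s., and a more careful accounting of the rates — using $\sum \varepsilon_k\varepsilon_{k-1}-\rho\sum\varepsilon_{k-1}^{\,2}=\sum\varepsilon_{k-1}V_k=\mathcal{O}(\sqrt{n\log n})$ by the law of iterated logarithm for martingales — yields $(\overline\rho_n-\rho)^2=\mathcal{O}(\log n/n)$ a.s., which is \eqref{ASCVGNRHO}. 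For the Durbin–Watson statistic, expanding the numerator of \eqref{DEFDN} as $\sum(\wh\varepsilon_k-\wh\varepsilon_{k-1})^2 = \sum\wh\varepsilon_k^{\,2}+\sum\wh\varepsilon_{k-1}^{\,2}-2\sum\wh\varepsilon_k\wh\varepsilon_{k-1}$ leads directly to the identity \eqref{RELRHODN}, $\wh D_n = 2(1-\overline\rho_n)+\zeta_n$, with $\zeta_n$ an explicit edge/normalisation term of order $\mathcal{O}(1/n)$ plus lower-order pieces; together with \eqref{ASCVGNRHO} this gives $\wh D_n\to D=2(1-\rho)$ a.s.

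For the sharp rate \eqref{ASCVGDW} on $\wh D_n-D$, I would not be content with merely propagating the $\mathcal{O}(\sqrt{n\log n})$ bound on $R_n$ through \eqref{RELRHODN}, because naively $R_n/n=\mathcal{O}(\sqrt{\log n/n})$ gives the wrong (square-root-of-the-claimed) rate. The point is that the dominant remainder term is quadratic, of the form $(\theta-\wh\theta_n)^t\bigl(\frac1n\sum\varphi_{k-1}\varphi_{k-1}^t\bigr)(\theta-\wh\theta_n)$ and $(\theta-\wh\theta_n)^t\bigl(\frac1n\sum\varphi_{k-1}\varepsilon_{k-1}\bigr)$-type cross terms; the genuinely linear-in-$(\theta-\wh\theta_n)$ contribution is $\frac1n\sum\varphi_{k-1}V_k$-weighted and is itself $\mathcal{O}(\sqrt{\log n}/n)$ by the martingale law of iterated logarithm. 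Controlling the cross term therefore requires the finite conditional moment of order $>4$ on $(V_n)$: it is needed to get $L^2$-type bounds with logarithmic loss on mixed sums $\sum\varphi_{k-1}\varepsilon_{k-1}$ and on $\sum \varepsilon_{k-1}^{\,4}$ appearing when one estimates the fluctuation of $\frac1n\sum\wh\varepsilon_{k-1}^{\,2}$ around its limit at rate $\mathcal{O}(\sqrt{\log n/n})$, and to keep the product $\|\theta-\wh\theta_n\|\cdot\mathcal{O}(\sqrt{\log n/n})$ under control so the overall bound is $\mathcal{O}(\log n/n)$. Assembling: $(\wh D_n-D)^2 \leq 4(\overline\rho_n-\rho)^2 + 2\zeta_n^2 = \mathcal{O}(\log n/n)$ a.s. I expect the main obstacle to be precisely this bookkeeping — isolating which remainder terms are truly quadratic (hence $\mathcal{O}(\log n/n)$ by the consistency rate) versus which are linear and must be handled by a martingale LIL — and verifying that the higher moment assumption is exactly what makes every error term fall below the $\log n/n$ threshold; the rest is the by-now-standard machinery of strong consistency and iterated-logarithm bounds for stochastic regression.
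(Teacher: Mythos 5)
Your proposal is correct and follows essentially the same route as the paper, which itself only sketches this proof: it establishes the keystone convergence $S_n/n\to\Lambda$ with $\Lambda$ invertible (Lemma \ref{L-CVGSN}), derives the $\log n/n$ rates for $\wh{\vartheta}_n$, $\wh{\theta}_n$, $\wh{\rho}_n$ from the strong law for martingales exactly as you do, and then defers the residual analysis to Appendix C of \cite{BPV2012}. One small correction to your bookkeeping: the term that actually forces the order-$>4$ moment assumption in \eqref{ASCVGDW} is the boundary term $\wh{\veps}_n^{\,2}/J_n$ hidden in $\zeta_n$ (since $\wh{D}_n$ normalizes by $J_n$ while $\overline{\rho}_n$ normalizes by $J_{n-1}$), which is $\mathcal{O}(\sqrt{\log n/n})$ only when $\veps_n^{2}=\mathcal{O}(\sqrt{n\log n})$ a.s., rather than the mixed sums you single out; in particular $\zeta_n$ is not $\mathcal{O}(1/n)$ under the order-$>2$ assumption alone.
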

\noindent{\bf Proof.}
The proofs are given in Appendix\,A. \demend
\ \vspace{1ex} \\
Let us now give the asymptotic normality of the Durbin-Watson statistic
which will be useful to build our serial correlation test.

\begin{thm}
\label{T-CLTDW}
Assume that $(V_n)$ has finite conditional moments of order $>2$. 
Then, we have
\begin{equation}
\label{CLTRHOO}
\sqrt{n}(\overline{\rho}_{n}-\rho)\liml
\cN\left(0,\tau^2\right)
\end{equation}
where the asymptotic variance $\tau^2$ is given by
\begin{eqnarray}\label{EXPTAU}
\tau^2 &=& \frac{(1-\rho^2)}{(\sigma^2+\nu^2)(\nu^2+\sigma^2\rho^{2(p+1)})}
\bigg[ 
\Bigl((\sigma^2 - \nu^2) - (p+1)\sigma^2\rho^{2p} +
(p-1)\sigma^2\rho^{2(p+1)}\Bigr)^2  \notag \\
& & \hspace{1cm} +\ \ 
\sigma^2(\nu^2+\sigma^2\rho^{2(p+1)})
\Bigl(4-(4p+3)\rho^{2p} + 4p \rho^{2(p+1)} - \rho^{2(2p+1)}\Bigr)
 \bigg].
\end{eqnarray}
Moreover, if $(V_n)$ has finite conditional moments of order $>4$, we also have
\begin{equation}
\label{CLTDW}
\sqrt{n}(\wh{D}_{n}-D)\liml
\cN\left(0,4\tau^2\right).
\end{equation}
\end{thm}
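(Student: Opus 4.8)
The plan is to first establish the central limit theorem for $\overline{\rho}_n$ in \eqref{CLTRHOO} and then transfer it to $\wh{D}_n$ via the linear relation \eqref{RELRHODN}. For the first part, I would start from the explicit expression
$$\overline{\rho}_n - \rho = \frac{\sum_{k=1}^n \widehat{\varepsilon}_{k-1}(\widehat{\varepsilon}_k - \rho\,\widehat{\varepsilon}_{k-1})}{\sum_{k=1}^n \widehat{\varepsilon}_{k-1}^{\,2}}.$$
The key is to replace the residuals $\widehat{\varepsilon}_k$ by the true noise $\varepsilon_k$ up to a controlled error. Using \eqref{RESIDUALN}, \eqref{ARXP} and \eqref{COR}, one writes $\widehat{\varepsilon}_k = \varepsilon_k + (\theta - \wh{\theta}_n)^t \varphi_{k-1}$, and from the almost sure rate $(\wh{\theta}_n - \theta)^2 = \mathcal{O}(\log n / n)$ (which follows from Theorem 3.1 and the convergence results of Section 2) together with the growth of $(\varphi_n)$, the contribution of this correction term to both numerator and denominator is negligible at the $\sqrt{n}$-scale. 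So the CLT for $\overline{\rho}_n$ reduces to a CLT for the martingale transform $M_n = \sum_{k=1}^n \varepsilon_{k-1} V_k$, divided by $\sum_{k=1}^n \varepsilon_{k-1}^{\,2}$ — but here is the subtlety introduced by the excitation: because of \eqref{CONTROL}, the process driving the regression is not just $\varepsilon_k$ but involves the exogenous $\xi_k$, so one must keep track of the joint asymptotic behavior of the empirical covariances of $(\varepsilon_k)$ and $(\xi_k)$ propagated through the ARX dynamics.

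The second step is to compute the asymptotic variance $\tau^2$ explicitly. I would apply the central limit theorem for martingales (e.g. the version in Hall–Heyde or Duflo) to the relevant bidimensional martingale, whose bracket converges almost surely to a deterministic limit by the strong law of large numbers for martingale transforms and the ergodic-type behavior of the stationary parts of $(\varepsilon_n)$ and $(\xi_n)$. The explicit constants in \eqref{EXPTAU} — the powers $\rho^{2p}$, $\rho^{2(p+1)}$, $\rho^{2(2p+1)}$, and the variance mixture $\sigma^2$, $\nu^2$ — should emerge from computing $\lim_n \frac1n \sum_{k=1}^n \varphi_{k-1}\varphi_{k-1}^t$ and the corresponding cross-terms with $V_k$, then propagating these through the linear maps $\Delta$ and $(\rI_p\ \ 0)\wh\Delta_n$ used to build $\wh\theta_n$ from $\wh\vartheta_n$; the delta method applied at the point $\vartheta$ (differentiating \eqref{THETACHAP} in $\wh\rho_n$ as well) accounts for the extra terms beyond the naive $(1-\rho^2)$ factor that one would get in the uncontrolled autoregressive case.

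For the final statement \eqref{CLTDW}, I would invoke \eqref{RELRHODN}: $\wh{D}_n - D = -2(\overline{\rho}_n - \rho) + \zeta_n$, so it suffices to show $\sqrt{n}\,\zeta_n \to 0$ in probability (in fact almost surely), after which Slutsky's lemma gives $\sqrt{n}(\wh{D}_n - D) \liml \cN(0, 4\tau^2)$. The remainder $\zeta_n$ consists of boundary terms like $\widehat{\varepsilon}_n^{\,2}/\sum_{k=0}^n \widehat{\varepsilon}_k^{\,2}$ and $\widehat{\varepsilon}_0^{\,2}/\sum_{k=0}^n \widehat{\varepsilon}_k^{\,2}$, together with a term proportional to $(\overline\rho_n-\rho)$ times a vanishing factor; since the denominator grows linearly in $n$ a.s. while $\widehat{\varepsilon}_n^{\,2} = \mathcal{O}(\log n)$ a.s. under the moment hypothesis, one gets $\zeta_n = \mathcal{O}(\log n / n)$, which is $o(1/\sqrt{n})$. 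The main obstacle, in my view, is not the martingale CLT itself but the bookkeeping in step two: correctly identifying the limiting covariance structure of the joint empirical moments of the excited regression vector $\Phi_n$ and carrying it faithfully through the two successive linear transformations (and the linearization in $\wh\rho_n$) that define $\wh\theta_n$, in order to land on the precise closed-form expression \eqref{EXPTAU}. This requires the stronger moment assumption ($>4$) only for the $\wh D_n$ statement because the residual-replacement error in the \emph{squared} increments $(\widehat\varepsilon_k - \widehat\varepsilon_{k-1})^2$ involves fourth-order empirical moments of the noise that must be controlled uniformly.
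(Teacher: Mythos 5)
Your overall architecture (a CLT for $\overline{\rho}_n$ first, then transfer to $\wh{D}_n$ through $\wh{D}_n-D=-2(\overline{\rho}_n-\rho)+R_n$ and Slutsky) matches the paper, and your treatment of the remainder in the last step is fine. The genuine gap is in your first step: the claim that the residual-replacement error is negligible at the $\sqrt{n}$-scale is false, and it is precisely where the difficulty of the theorem lies. Writing $\wh{\veps}_k=\veps_k+(\theta-\wh{\theta}_n)^t\varphi_{k-1}$, the numerator of $\overline{\rho}_n-\rho$ contains the cross term
$$(\theta-\wh{\theta}_n)^t\sum_{k=1}^n\veps_{k-1}\bigl(\varphi_{k-1}-\rho\,\varphi_{k-2}\bigr),$$
and since $\frac{1}{n}\sum_k\veps_{k-1}\varphi_{k-1}$ converges a.s.\ to a \emph{nonzero} limit (the components of $\varphi_{k-1}$ are correlated with $\veps_{k-1}$ through the closed-loop dynamics), this term equals $\sqrt{n}\,(\theta-\wh{\theta}_n)^t$ times an $O(1)$ vector, hence is exactly of order $\sqrt{n}$ in distribution, not $o(\sqrt{n})$. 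The a.s.\ rate $\|\wh{\theta}_n-\theta\|^2=\mathcal{O}(\log n/n)$ only yields $\mathcal{O}(\sqrt{n\log n})$ here, which does not even dominate the $\sqrt{n}$ scale. If your reduction to the pure martingale $\sum\veps_{k-1}V_k$ divided by $\sum\veps_{k-1}^2$ were valid, you would obtain $\tau^2=1-\rho^2$, which contradicts \eqref{EXPTAU}: at $\rho=0$ the correct variance is $(\sigma^2+\nu^2)/\nu^2$, not $1$.

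The paper resolves this with a \emph{joint} martingale decomposition \eqref{MAINDECOCLTTHETARHO} for the pair $(\wh{\vartheta}_n-\vartheta,\ \overline{\rho}_n-\rho)$, driven by the $(p+3)$-dimensional martingale $Z_n=(M_n,N_n)$ with $M_n=\sum\Phi_{k-1}V_k$ and $N_n=\sum\veps_{k-1}V_k$; the multidimensional martingale CLT is applied to $Z_n$, whose normalized bracket converges to a matrix whose off-diagonal block $\zeta=\lim\frac{1}{n}\sum\Phi_k\veps_k$ is the crucial new object. The resulting variance $\tau^2=(1-\rho^2)^2(\sigma^{-2}\cC^t\Lambda\cC+2\cC^t\zeta+\eta)$ exhibits three contributions: the estimation-error fluctuation, the cross term, and the pure-noise term $(1-\rho^2)^2\eta=1-\rho^2$ that your reduction would retain alone. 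Your second paragraph (delta method through $\Delta$ and $\wh{\Delta}_n$, ``extra terms beyond the naive $(1-\rho^2)$ factor'') shows you sense this, but it contradicts your first paragraph and never identifies the required ingredient, namely the joint limiting law of $\sqrt{n}(\wh{\theta}_n-\theta)$ and $N_n/\sqrt{n}$ together with their asymptotic covariance. As written, the proposal would land on the wrong variance.
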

\noindent{\bf Proof.}
The proofs are given in Appendix\,B. \demend

\vspace{-2ex} 
\begin{rem}
{\rm
We now point out the crucial role played by the additional excitation in the control law given by \eqref{CONTROL}. 
It follows from
\eqref{EXPTAU} that if $\rho=0$, then $\tau^2$ reduces to
$$
\tau^2=\frac{\sigma^2 + \nu^2}{\nu^2}.
$$
Consequently, if $\nu^2=0$ i.e. there is no persistent excitation, then this variance explodes.
Therefore, the persistent excitation allows to investigate the important case $\rho=0$ and more
generally to stabilize the asymptotic variance of the Durbin-Watson statistic. 
} 
\end{rem}

We are now in the position to test whether or not the serial correlation parameter $\rho=0$.
According to Theorem \ref{T-ASCVGDW}, we have under the null hypothesis $\cH_0$,
$$
\lim_{n \rightarrow \infty} \wh{D}_n= 2
\hspace{1cm} \text{a.s.}
$$
In addition, we clearly have from \eqref{CLTDW} that under $\cH_0$,
\begin{equation}
\label{CHICLTDW}
\frac{n}{4\tau^2} \left( \wh{D}_{n} - 2 \right)^2 \liml \chi^2
\end{equation}
where $\chi^2$ stands for a Chi-square distribution with one degree of freedom.
It remains to accurately estimate the asymptotic variance $\tau^2$. It is not hard to see that
\begin{equation}
\label{NH}
\lim_{n \rightarrow \infty} \frac{1}{n}\sum_{k=0}^nX_k^2 =  \sigma^2+\nu^2
\hspace{1cm} \text{a.s.}
\end{equation}
Consequently, as $\nu^2$ is known, it immediately follows from \eqref{NH} that
\begin{equation*}
\wh{\sigma}^2_n = \frac{1}{n}\sum_{k=1}^n X_k^2 - \nu^2
\end{equation*}
converges almost surely to $\sigma^2$.
Hence, we can propose to make use of
\begin{eqnarray}
\label{TAUCHAP}
\wh{\tau}_n^2 &=& \frac{(1-\overline{\rho}^{\,2}_n)}
{(\wh{\sigma}^2_n+ \nu^2) (\nu^2+\wh{\sigma}^2_n\overline{\rho}_n^{\,2(p+1)}) }
\bigg[ 
\Bigl((\wh{\sigma}^2_n - \nu^2) - (p+1)\wh{\sigma}^2_n\overline{\rho}^{\,2p}_n + (p-1)
\wh{\sigma}^2_n\overline{\rho}^{\,2(p+1)}_n\Bigr)^2
\notag \\
& & \hspace{1cm} +\ \  
\wh{\sigma}^2_n(\nu^2+\wh{\sigma}^2_n\overline{\rho}^{\,2(p+1)}_n)
\Bigl(4-(4p+3)\overline{\rho}^{\,2p}_n + 4p \overline{\rho}^{\,2(p+1)}_n - \overline{\rho}^{\,2(2p+1)}_n\Bigr) 
\bigg].
\end{eqnarray}
Therefore, our bilateral statistical test relies on the following results.

\begin{cor}
\label{T-DWTESTCERO}
Assume that $(x_n)$ and $(V_n)$ have finite conditional moments of order $>4$. 
Then, under the null hypothesis $\cH_0\,:\,`` \rho = 0"$,
\begin{equation}
\label{DWTESTH0CERO}
T_n = \frac{n}{4\wh{\tau}_n^{\, 2}} \left( \wh{D}_{n} - 2 \right)^2 \liml \chi^2
\end{equation}
In addition, under the alternative hypothesis $\cH_1\,:\,`` \rho \neq 0"$,
\begin{equation}
\label{DWTESTH1CERO}
\lim_{n\rightarrow \infty}  T_n = + \infty \hspace{1cm} \textnormal{a.s.}
\end{equation}
\end{cor}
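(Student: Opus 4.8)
The plan is to derive the corollary from Theorems~\ref{T-ASCVGDW} and~\ref{T-CLTDW} together with the strong consistency of the variance estimator $\wh{\tau}_n^2$, combined via the continuous mapping theorem and Slutsky's lemma. The preliminary step is to show that $\wh{\tau}_n^2$ converges almost surely to $\tau^2$. By Theorem~\ref{T-ASCVGDW}, $\overline{\rho}_n \to \rho$ a.s., and, as noted just after \eqref{NH}, $\wh{\sigma}_n^2 \to \sigma^2$ a.s. Now $\wh{\tau}_n^2$ in \eqref{TAUCHAP} is precisely the right-hand side of \eqref{EXPTAU} with $\rho$ and $\sigma^2$ replaced by $\overline{\rho}_n$ and $\wh{\sigma}_n^2$; in other words $\wh{\tau}_n^2 = R(\overline{\rho}_n, \wh{\sigma}_n^2)$ for a fixed rational function $R(r,s)$ whose denominator $(s+\nu^2)(\nu^2 + s\,r^{2(p+1)})$ is bounded away from zero in a neighbourhood of $(\rho,\sigma^2)$ because $\sigma^2>0$ and $\nu^2>0$. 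Hence $R$ is continuous at $(\rho,\sigma^2)$, and the continuous mapping theorem yields $\wh{\tau}_n^2 \to R(\rho,\sigma^2)=\tau^2$ a.s. (one even gets the $\mathcal{O}(\log n/n)$ rate from \eqref{ASCVGNRHO}, but this is not needed).

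Under $\cH_0$ we have $\rho=0$, so $D=2$ and, by \eqref{EXPTAU} (see also the Remark), $\tau^2=(\sigma^2+\nu^2)/\nu^2$, which is finite and strictly positive. Relation \eqref{CHICLTDW} then gives $\tfrac{n}{4\tau^2}(\wh{D}_n-2)^2 \liml \chi^2$. Since $\wh{\tau}_n^2/\tau^2 \to 1$ a.s. — so that, in particular, $\wh{\tau}_n^2>0$ eventually and $\tau^2/\wh{\tau}_n^2 \to 1$ in probability — we may write
\[
T_n=\frac{\tau^2}{\wh{\tau}_n^2}\cdot\frac{n}{4\tau^2}\bigl(\wh{D}_n-2\bigr)^2,
\]
and Slutsky's lemma yields \eqref{DWTESTH0CERO}.

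Under $\cH_1$ we have $\rho\neq 0$, so Theorem~\ref{T-ASCVGDW} gives $\wh{D}_n \to D=2(1-\rho)$ a.s., whence $(\wh{D}_n-2)^2 \to 4\rho^2>0$ a.s. Moreover $\wh{\tau}_n^2 \to \tau^2$ a.s., with $\tau^2$ a finite positive constant (finiteness because the denominators in \eqref{EXPTAU} do not vanish, positivity because the bracket in \eqref{EXPTAU} is strictly positive for $0<|\rho|<1$). Consequently $T_n=\tfrac{n}{4\wh{\tau}_n^2}(\wh{D}_n-2)^2 \to +\infty$ a.s., which is \eqref{DWTESTH1CERO}. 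The only point that needs a little attention is the consistency $\wh{\tau}_n^2 \to \tau^2$ and, within it, the non-degeneracy of the denominator appearing in \eqref{TAUCHAP}; both are immediate from $\sigma^2>0$ and $\nu^2>0$, so beyond this the proof is a routine combination of the two theorems with the continuous mapping theorem and Slutsky's lemma, and I do not expect a genuine obstacle.
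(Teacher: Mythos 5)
Your proof is correct and follows exactly the route the paper intends: the corollary is presented as an immediate consequence of \eqref{CHICLTDW}, the strong consistency of $\overline{\rho}_n$ and $\wh{\sigma}_n^2$ (hence of the plug-in estimator $\wh{\tau}_n^{\,2}$), and Slutsky's lemma under $\cH_0$, together with the almost sure convergence $\wh{D}_n \to 2(1-\rho) \neq 2$ under $\cH_1$. No genuinely different argument is involved, and your attention to the non-degeneracy of the denominator in \eqref{TAUCHAP} is the right (and only) point requiring care.
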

\vspace{1ex}

\noindent
From a practical point of view, for a significance level $\alpha$ where $0<\alpha <1$, 
the acceptance and rejection regions are given by 
$\cA= [0, a_{\alpha}]$ and $\mathcal{R} = ] a_{\alpha}, +\infty [$
where $a_{\alpha}$ stands for the $(1-\alpha)$-quantile of the Chi-square distribution with one degree of freedom.
The null hypothesis $\cH_0$ will be accepted if \ $T_n  \leq a_{\alpha}$,
and will be rejected otherwise. 
\vspace{1ex}

Let us now make a few comments.
First of all, under $\Hzero$, we already saw that $\tau^2$ reduces to 
$(\sigma^2 + \nu^2)/\nu^2$. It can be estimated by $(\wh{\sigma}^2_n + \nu^2)/\nu^2$.
Therefore, it is also possible to consider the test statistic associated with
\begin{equation}\label{Tntilde}
\cT_n = \frac{n^2 \nu^2}{4(\wh{\sigma}^2_n + \nu^2)} \left( \wh{D}_{n} - 2  \right)^2.
\end{equation}
Intuitively, one may think that the statistical test based on $\cT_n$
is more efficient under $\cH_0$ since we do not estimate the parameter
$\rho$, but less powerful under $\cH_1$. This point will be examined
in Section 4. Next, the acceptance of $\Hzero$ after our statistical test procedure should lead to 
a change of control law. As a matter of fact, if we accept $\rho=0$, 
the driven noise $(\veps_n)$ is not correlated. It means that we can implement 
the usual control law \cite{AW1995} associated with model (\ref{ARXP}) given, for all $n \geq 0$, by
\begin{equation*}
U_n  = x_{n+1} - \wh{\theta}_n^{\,t}\varphi_{n}
\end{equation*}
where $\wh{\theta}_n$ stands for the standard least-squares estimator associated with \eqref{ARXP}.
Finally, the test provided by Corollary \ref{T-DWTESTCERO} may be of course extended 
if we replace zero by any $\rho_0 \in \dR$ with $|\rho_0|<1$  in the null hypothesis. 
To be more precise, we are able as in \cite{BPV2012} to test 
$\Hzero:\,\og \rho = \rho_0\fg$ versus 
$\Hone:\,\og \rho \neq \rho_0\fg$.
We wish to mention that the asymptotic variance $\tau^2$ is smaller than the one obtained in  \cite{BPV2012}.

\section{Numerical Experiments}
This section is devoted to the application of our Durbin-Watson serial correlation test. 
Although this test has several potential of being applied in concrete situations, 
a large search in the literature did not offer any one. 
We then consider artificial models for illustrative purposes and for studying the empirical level and power of our test for sample sizes from small to moderate, that is $n=50$, 100, 200, 500, 1000 and 2000.
\vspace{1ex}

In order to keep this section brief, we restrict ourself to the three explosive models in open-loop
\begin{eqnarray}
\label{ARX1} 
X_{n+1} &=& \frac{3}{2}X_n + U_n + \varepsilon_{n+1} \\
\label{ARX2}
X_{n+1} &=& -X_n +2  X_{n-1} + U_n + \varepsilon_{n+1}\\
\label{ARX3}
X_{n+1} &=&  X_n + \frac{1}{2} X_{n-1} + \frac{1}{4} X_{n-2} + U_n + \varepsilon_{n+1}
\end{eqnarray}
where the driven noise $(\veps_n)$ is given by (\ref{COR}) and $(V_n)$ is a sequence of independent and identically distributed random
variables with $\cN(0,1)$ distribution. 
The control law $U_n$ is given by (\ref{CONTROL}) where, for the sake of simplicity, the reference trajectory $x_n = 0$
and the persistent excitation $(\xi_n)$ s a sequence of independent and identically distributed random
variables with $\cN(0,\nu^2)$ distribution.
\ \vspace{1ex} \par
For each model, we based our numerical simulations on $N=1000$ realizations of sample size $n$. 
We use a short learning period of $100$ time steps. This learning period allows us to forget the transitory phase.
The level of significance is set to $\alpha=5\%$. 
For the statistical tests based on $T_n$ and $\cT_n$,
we are interested in the empirical level under $\Hzero$ to be compared to
the theoretical level $5\%$, and the empirical power 
under $\Hone$, to be compared with 1.
\ \vspace{1ex} \par
First of all, let us study the effect of the variance $\nu^2$ of the exogenous noise $(\xi_n)$
on the behavior of the statistical test under $\Hzero$.

\begin{table}[H]
\begin{tabular}{|c||c|c|c|c|c|c|c|}
\hline 
 & $n$ & 50 & 100 & 200 & 500 & 1000 & 2000 \\
\hline 
\hline
$\nu = 0.5$  & $T_n$ & 0.9\% &1.6\% & 2\% & 3\% & 2.9\% & 4.9\% \\
\cline{2-8}
 & $\cT_n$ &  0\% & 0.1\% & 0.5\% & 2.1\% & 2.4\% & 4.4\%  \\
\hline
$\nu = 1$  & $T_n$ & 2.5\% & 2.5\% &  3.3\% & 4.4\% & 4.4\% & 4.9\% \\
\cline{2-8}
 & $\cT_n$ &  1.3\% & 1.3\% & 2.5\% & 4.1\% & 4\%  & 4.8\% \\
\hline 
$\nu = 2$   & $T_n$ &5.2\% & 4.1\% & 4.9\% & 5.3\% & 4.7\% & 4.6\% \\
\cline{2-8}
 & $\cT_n$ & 3.7\% & 3.7\% & 4.1\% & 5.1\% & 4.7\% & 4.6\% \\
\hline 
$\nu = 3$  & $T_n$ & 5.8\% & 5.1\% & 5.6\% & 4.3\% & 4.3\% & 4.8\% \\
\cline{2-8}
 & $\cT_n$ & 4.5\% & 4.6\% & 5.1\% & 4.2\% & 4.3\% & 4.7\% \\
\hline 
\end{tabular}
\vspace{1ex}
\caption{Model  (\ref{ARX1}). Percentage of rejections of our test under $\Hzero$ 
(to be compared to the $5\%$ theoretical level).}
\label{TABResNU}
\end{table}

It is clear from Table \ref{TABResNU}, where one can find the results 
obtained for different values of $\nu$, that the variance of the persistent excitation $(\xi_n)$
in the control law plays a crucial role.
Indeed, one can observe that if it is too small, then the empirical level 
of the test is bad for sample sizes from small to moderate  $n \leq 1000$.
Of course, a high value of $\nu^2$ improves the performance of 
the test under $\Hzero$, but degrades the performance of the tracking.
The value $\nu = 2$ realizes a good compromise and allows a good calibration of the test under $\Hzero$.

\begin{table}[H]
\begin{tabular}{|c||c|c|c|c|c|c|c|}
\hline 
 & $n$ & 50 & 100 & 200 & 500 & 1000 & 2000 \\
\hline 
Model (\ref{ARX1}) & $T_n$ &5.2\% & 4.1\% & 4.9\% & 5.3\% & 4.7\% & 4.6\% \\
\cline{2-8}
 & $\cT_n$ & 3.7\% & 3.7\% & 4.1\% & 5.1\% & 4.7\% & 4.6\% \\
\hline 
Model (\ref{ARX2}) & $T_n$ & 5.9\% & 3\% & 3.9\% & 4.6\% & 4.8\% &5.2\% \\
\cline{2-8}
 & $\cT_n$ & 4.7\% & 2.5\% & 3.7\% & 4.5\% & 4.8\% & 5.1\% \\
\hline 
Model (\ref{ARX3}) & $T_n$ & 4.8\% & 4.7\% & 4.1\% & 5.2\% & 4.9\% & 6\% \\
\cline{2-8}
 & $\cT_n$ & 3.8\% & 3.5\% & 3.9\% & 5\% & 4.9\% & 5.9\% \\
\hline 
\end{tabular}
\vspace{1ex}
\caption{Percentage of rejections of our test under $\Hzero$
(to be compared to the $5\%$ theoretical level). $\nu = 2$.}
\label{TABResHzero}
\end{table}

One can find in Table \ref{TABResHzero} the percentage of rejections of our test under $\Hzero$
for the three different models \eqref{ARX1} to \eqref{ARX3}. 
The empirical levels of the test are close to the $5\%$ theoretical level 
even for small sample sizes.
Both statistical tests based on $T_n$ and $\cT_n$ are comparable
even if the test statistic $\cT_n$ systematically tends to less reject $\Hzero$
than the test statistic $T_n$.
\vspace{1ex}

Let us now study the empirical power of our statistical test.
One can find in Tables \ref{TABResHoneARX1} to \ref{TABResHoneARX3}
the results obtained for each of the three models \eqref{ARX1} to \eqref{ARX3}. 
As expected, it is difficult to reject $\Hzero$ when $\rho = 0.05$ for small sample sizes
or $\rho = 0.1$ to a lesser extent. 
However, the test performs pretty well as the percentage of correct decisions increases with the sample size.

\begin{table}[H]
\begin{tabular}{|c||c|c|c|c|c|c|c|}
\hline 
& $n$ & 50 & 100 & 200 & 500 & 1000 & 2000 \\
\hline 
\hline
$\rho = 0.05$  & $T_n$  & 6.4\% & 7.1\% & 9.6\% & 18.8\% & 30.6\% & 50.3\% \\
\cline{2-8}
 & $\cT_n$ & 4.8\% & 6.2\% & 9\% & 18.4\% & 30.5\% & 50.2\% \\
\hline 
$\rho = 0.1$  & $T_n$  & 11.1\% & 17.4\% & 25.9\% & 56.8\% & 81.9\% & 98.2\% \\
\cline{2-8}
 & $\cT_n$ & 8.9\% & 15.2\% & 24.7\% & 56.2\% & 81.6\% & 98.1\% \\
\hline 
$\rho = 0.2$  & $T_n$  & 32\% & 47.5\% & 77.6\% & 98.9\% & 100\% & 100\%\\
\cline{2-8}
 & $\cT_n$ & 10.7\% & 24.1\% & 49.5\% &  91.2\% & 99.5\% & 100\%\\
\hline 
$\rho = 0.3$  & $T_n$  & 57\% & 83\% & 97.7\% & 100\% & 100\% & 100\%\\
\cline{2-8}
 & $\cT_n$ & 51.7\% & 81.7\% & 97.5\% & 100\% & 100\% & 100\%\\
\hline 
$\rho = 0.4$  & $T_n$  & 78.8\% & 97.2\% & 99.7\% & 100\%& 100\%& 100\%  \\
\cline{2-8}
 & $\cT_n$ & 73.4\% & 96.5\% & 99.7\% & 100\% & 100\% & 100\%\\
\hline 
\end{tabular}
\vspace{1ex}
\caption{Model (\ref{ARX1}). Percentage of correct decisions of our test under $\Hone$.}
\label{TABResHoneARX1}
\end{table}

We further observe, as expected, that for a fixed value of the sample size $n$, 
the higher the value of $\rho$ is, the more the percentage of correct decisions increases.
We also notice that for a fixed value of $\rho$, the empirical power increases 
with the sample size. In conclusion, the test performs very well under $\Hone$.
Moreover, higher values of the order $p$ does not degrade the performances of our statistical test.

\begin{table}[H]
\begin{tabular}{|c||c|c|c|c|c|c|c|}
\hline 
& $n$ & 50 & 100 & 200 & 500 & 1000 & 2000 \\
\hline 
\hline
$\rho = 0.05$  & $T_n$  & 5.2\% & 6.2\% & 11\% & 17.6\% & 28.5\% & 54.6\% \\
\cline{2-8}
 & $\cT_n$ & 4.5\% & 5.6\% & 10.7\% & 17.4\% & 28.3\% & 54.6\% \\
\hline 
$\rho = 0.1$  & $T_n$  & 11.5\% & 15\% & 25.3\% & 53\% & 78.1\% & 97.3\%\\
\cline{2-8}
 & $\cT_n$ & 9.7\% & 14.1\% & 24.6\% & 52.5\% & 78.1\% & 97.3\%\\
\hline 
$\rho = 0.2$  & $T_n$  & 28.5\% & 45.8\% & 70.9\% & 98.3\% & 99.9\% & 100\%\\
\cline{2-8}
 & $\cT_n$ & 24.4\% & 44.1\% & 69.6\% & 98.3\% & 99.9\% & 100\%\\
\hline 
$\rho = 0.3$  & $T_n$  & 53.1\% & 80.6\% & 97\% & 100\% & 100\% & 100\% \\
\cline{2-8}
 & $\cT_n$ & 48.4\% & 79.1\% & 96.9\% & 100\% & 100\% & 100\%\\
\hline 
$\rho = 0.4$  & $T_n$  & 77.4\% & 96.4\% & 100\% & 100\% & 100\% & 100\% \\
\cline{2-8}
 & $\cT_n$ & 74\% & 95.9\% & 100\% & 100\% & 100\% & 100\% \\
\hline 
\end{tabular}
\vspace{1ex}
\caption{Model (\ref{ARX2}). Percentage of correct decisions of our test under $\Hone$.}
\label{TABResHoneARX2}
\end{table}

Finally, one can realize that for small sample sizes, the statistical test 
based on $\cT_n$ is less powerful than the one associated with $T_n$.
We also wish to mention that, by symmetry, the performance of our statistical tests are 
the same for negative values of $\rho$.

\begin{table}[H]
\begin{tabular}{|c||c|c|c|c|c|c|c|}
\hline 
& $n$ & 50 & 100 & 200 & 500 & 1000 & 2000 \\
\hline 
\hline
$\rho = 0.05$  & $T_n$  &6.9\% & 7\% & 9.6\% &16\% &28.5\% & 50.6\% \\
\cline{2-8}
 & $\cT_n$ & 6.1\% & 6.6\% &9.4\% &15.9\% &28.5\% &50.5\% \\
\hline 
$\rho = 0.1$  & $T_n$  & 11\% & 14.2\% & 23.8\% & 53.6\% &83.5\% & 98\%\\
\cline{2-8}
 & $\cT_n$ & 9.6\% & 12.6\% & 23.6\% & 53\% &83.1\% &98\%\\
\hline 
$\rho = 0.2$  & $T_n$  & 29.7\% & 46.2\% &77.1\%  &97.7\% &100\% &100\% \\
\cline{2-8}
 & $\cT_n$ & 25.9\% &44.6\% &76.3\% &97.7\% &100\% &100\%\\
\hline 
$\rho = 0.3$  & $T_n$  & 52.5\% & 81.1\% & 97.4\% & 100\% &100\% &100\% \\
\cline{2-8}
 & $\cT_n$ & 48.8\% &79.2\% &97.3\% &100\% &100\% &100\%\\
\hline 
$\rho = 0.4$  & $T_n$  & 75.9\% & 96.9\% & 100\% & 100\% & 100\% & 100\% \\
\cline{2-8}
 & $\cT_n$ & 73.3\% & 96.6\% & 100\% & 100\% & 100\% & 100\% \\
\hline 
\end{tabular}
\vspace{1ex}
\caption{Model (\ref{ARX3}). Percentage of correct decisions of our test under $\Hone$.}
\label{TABResHoneARX3}
\end{table}


\section{Conclusion}

Thanks to the introduction of a persistent excitation in the control law
used to regulate an ARX(p,1) process, we were able to propose a non correlation 
test for the driven noise based on the well-known Durbin-Watson statistic.
In addition, we have shown through a simulation study on artificial models
the efficiency of our statistical test procedure. 
\ \vspace{1ex} \par
Of course, many questions remain open.
In particular, the extension of our results
to ARX$(p,q)$ processes where $q>1$, would be a very attractive challenge 
for the control community.
Even though our test is a potentially  useful tool, we have seen in the literature
that ARX$(p,1)$ models are often too simple 
for being applied to real physical models.
However, the study of such models is much more difficult to handle. 
On the one hand, it will be necessary
to make an additional assumption of strong controllability, see \cite{BVAUT2010}, \cite{BVIJC2010}. 
On the other hand, the asymptotic variance given by \eqref{EXPTAU} will be much
more complicated as well as its estimate given by \eqref{TAUCHAP}. 
\ \vspace{1ex} \par
Finally, this work may be seen as a first step towards a serial correlation test for 
ARX$(p,q)$ processes. The implementation of our statistical test within a real physical model
will allow to fully validate its efficiency. 

\section*{Appendix A}

\begin{center}
{\small PROOFS OF THE ALMOST SURE CONVERGENCE RESULTS}
\end{center}

\renewcommand{\thesection}{\Alph{section}} 
\renewcommand{\theequation}
{\thesection.\arabic{equation}} \setcounter{section}{1}  
\setcounter{equation}{0}
\ \\
The almost sure convergence results rely on the following keystone lemma.
\begin{lem} \label{L-CVGSN}
Assume that $(V_n)$ has a finite conditional moment of order $>2$. Then, we have
\begin{equation}
\label{CVGSN}
\lim_{n\rightarrow \infty} \frac{S_n}{n}=\Lambda \hspace{1cm} \text{a.s.}
\end{equation}
where $\Lambda$ is the symmetric square matrix of order $p+2$ given by 
\begin{equation}  
\label{DEFLAMBDA}
\Lambda=\left( 
\begin{array}{cc}
L & K^t \\ 
K &H
\end{array}
\right)
\end{equation}
with  $L=(\sigma^2 + \nu^2)\rI_{p+1}$ where $\rI_{p+1}$ stands for the identity matrix of order $p+1$,
$K$ is the line vector of $\dR^{p+1}$ 
\begin{equation}
\label{DEFK}
K=\Bigl(K_0,K_1,K_2,\ldots,K_p\Bigr)
\end{equation}
with $K_0=\nu^2$ and for  all $1\leq k \leq p$, $K_k=-(\sigma^2+\nu^2)\theta_k-\sigma^2\rho^k$ and 
$H$ is the positive real number given by
\begin{equation}
\label{DEFH}
H=\nu^2+\sigma^2\sum_{k=1}^p (\theta_k+\rho^k)^2+\nu^2\sum_{k=1}^p \theta_k^2+\frac{\sigma^2\rho^{2(p+1)}}{1-\rho^2}.
\end{equation}
\end{lem}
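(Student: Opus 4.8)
The plan is to turn the recursive least--squares dynamics into an explicit asymptotic representation of the regression vector $\Phi_n$ in terms of the two noises $(\xi_n)$ and $(V_n)$, and then to read off $\Lambda$ block by block. First I would combine the fundamental equation \eqref{EQFONDA} with the excited control law \eqref{CONTROL} to obtain the closed--loop dynamics
\[
X_{n+1}=x_{n+1}+(\vartheta-\wh{\vartheta}_n)^{t}\Phi_n+\xi_{n+1}+V_{n+1}.
\]
The second ingredient, inherited from the adaptive tracking analysis of \cite{BPV2012} and suitably adapted to the presence of the excitation $(\xi_n)$, is the stability of the excited closed loop together with the strong consistency of $\wh{\vartheta}_n$: namely $\frac1n\sum_{k=0}^{n}\|\Phi_k\|^2$ is almost surely bounded, $\wh{\vartheta}_n\to\vartheta$ a.s., and the cumulated prediction error satisfies $\sum_{k=0}^{n}\bigl((\vartheta-\wh{\vartheta}_k)^{t}\Phi_k\bigr)^2=o(n)$ a.s. Combined with the Ces\`aro smallness \eqref{CT} of the reference trajectory, this yields $X_{n+1}=\xi_{n+1}+V_{n+1}+W_{n+1}$ with $\frac1n\sum_{k=1}^{n}W_k^2\to 0$ a.s.

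Next I would obtain the matching representation for the control. Writing the original ARX$(p,1)$ recursion \eqref{ARXP} at time $n$ as $U_{n-1}=X_n-\sum_{k=1}^{p}\theta_kX_{n-k}-\varepsilon_n$, substituting the previous display, and using $\varepsilon_n=\rho\,\varepsilon_{n-1}+V_n$ with $\varepsilon_{n-1}=\sum_{j\geq 0}\rho^{j}V_{n-1-j}$ up to a geometrically small initial term, one gets
\[
U_{n-1}=\xi_n-\sum_{k=1}^{p}\theta_k\xi_{n-k}-\sum_{k=1}^{p}(\theta_k+\rho^{k})V_{n-k}-\sum_{j\geq p+1}\rho^{j}V_{n-j}+\wt{W}_n
\]
with $\frac1n\sum_{k=1}^{n}\wt{W}_k^2\to 0$ a.s. At this point every coordinate of $\Phi_n$ equals, up to a Ces\`aro--negligible remainder, a fixed linear functional of $(\xi_n)$ and $(V_n)$.

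It then remains to evaluate $\lim_n \frac1n S_n$ entrywise. By the strong law of large numbers for the exogenous noise, $\frac1n\sum_k \xi_{k-i}\xi_{k-j}\to\nu^2\delta_{ij}$; by the martingale strong law of large numbers for $(V_n)$ --- this is where the finite conditional moment of order $>2$ is used --- $\frac1n\sum_k V_{k-i}V_{k-j}\to\sigma^2\delta_{ij}$, while all mixed averages between $(\xi_n)$ and $(V_n)$ vanish by independence and the infinite AR$(1)$ tail $\sum_{j\geq p+1}\rho^{j}V_{n-j}$ is controlled by a geometric truncation. A Cauchy--Schwarz estimate then discards the remainders $W_n$ and $\wt{W}_n$ from every bilinear average, since their quadratic means tend to $0$ while those of the leading terms stay bounded. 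Reading the three blocks of $\Lambda$ gives $L=(\sigma^2+\nu^2)\rI_{p+1}$ from $\dE[X_{n-i}X_{n-j}]=(\sigma^2+\nu^2)\delta_{ij}$, the vector $K$ of \eqref{DEFK} with $K_0=\nu^2$ and $K_k=-(\sigma^2+\nu^2)\theta_k-\sigma^2\rho^k$ from $\dE[X_{n-k}U_{n-1}]$, and finally $H$ of \eqref{DEFH} from $\dE[U_{n-1}^2]$, the closed form following from the identity $\sum_{k=1}^{p}\rho^{2k}+\rho^{2(p+1)}/(1-\rho^2)=\rho^2/(1-\rho^2)$.

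The step I expect to be the main obstacle is the second one: proving that the \emph{excited} closed loop is stable and that its least--squares prediction error is $o(n)$ almost surely. This is precisely where the persistent excitation is indispensable --- it forces $\lambda_{\min}(S_n)$ to grow linearly, which both rules out the near--degeneracy of $S_n$ affecting the plain tracking control of \cite{BPV2012} and makes the limit $\Lambda$ positive definite; the validity of the noise representations of $X_n$ and $U_{n-1}$ rests on this bound, after which the identification of $\Lambda$ is a routine application of the strong laws quoted above.
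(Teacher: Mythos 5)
Your outline is correct and is essentially the argument the paper relies on: the paper gives no proof of its own but defers to Theorem~4.1 of \cite{BVIJC2010}, and that proof proceeds exactly as you describe --- closed-loop representation $X_{n+1}=x_{n+1}+(\vartheta-\wh{\vartheta}_n)^{t}\Phi_n+\xi_{n+1}+V_{n+1}$, reduction of $X_n$ and $U_{n-1}$ to fixed linear functionals of $(\xi_n)$ and $(V_n)$ plus Ces\`aro-negligible remainders, then entrywise strong laws. Your computations of $L$, $K$ and $H$ all check out (the final identity you quote for $\sum_{k=1}^p\rho^{2k}$ is not actually needed, since \eqref{DEFH} already keeps the tail $\sigma^2\rho^{2(p+1)}/(1-\rho^2)$ unexpanded). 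The one point to handle with care is the step you yourself flag: the stability bound $\frac1n\sum_k\|\Phi_k\|^2=\mathcal{O}(1)$ and the excitation/regret bound $\sum_k\bigl((\vartheta-\wh{\vartheta}_k)^{t}\Phi_k\bigr)^2=o(n)$ must be obtained by the self-normalized Lai--Wei/Guo--Chen arguments (as in \cite{GUO1}, \cite{BVIJC2010}) \emph{without} presupposing $S_n/n\to\Lambda$, since in this paper the consistency rate for $\wh{\vartheta}_n$ (Theorem \ref{T-ASCVGVARTHETA}) is itself deduced from the present lemma; invoking the consistency of $\wh{\vartheta}_n$ as an input here would be circular.
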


The proof of Lemma \ref{L-CVGSN} is left to the reader as it follows exactly the same lines as the one of Theorem 4.1 in \cite{BVIJC2010}.
Denote by $S$ the Schur complement of  $L$ in $\Lambda$,
$$
S= H - \frac{1}{\sigma^2 + \nu^2} \parallel\! K \!\parallel^2= \frac{\sigma^2(\nu^2+\sigma^2\rho^{2(p+1)})}{(1-\rho^2)(\sigma^2+\nu^2)}.
$$
We deduce from \eqref{DEFLAMBDA} that
$$
\det(\Lambda) = S\det(L) = S (\sigma^2+\nu^2)^{p+1}=\frac{\sigma^2(\nu^2+\sigma^2\rho^{2(p+1)})(\sigma^2+\nu^2)^p}{1-\rho^2}.
$$
Consequently, whatever the value of the parameter $\rho$ with $|\rho|<1$, $\det(\Lambda) \neq 0$ which means that the matrix 
$\Lambda$ is always invertible. The almost sure convergence of the least squares estimator $\wh{\vartheta}_{n}$ of the parameter 
$\vartheta$ associated with the $\text{ARX}(p+1,2)$ process given by \eqref{EQFONDA}
is as follows.

\begin{thm} 
\label{T-ASCVGVARTHETA}
Assume that $(V_n)$ has a finite conditional moment of order $>2$. 
Then, $\wh{\vartheta}_{n}$ converges almost surely to $\vartheta$,
\begin{equation}  
\label{ASCVGVARTHETA}
\parallel \wh{\vartheta}_{n}-\vartheta \parallel^{2}= \mathcal{O} 
\left( \frac{\log n}{n} \right) 
\hspace{1cm}\text{a.s.}
\end{equation}
\end{thm}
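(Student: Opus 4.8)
The plan is to deduce the statement from the classical strong consistency theory for least squares estimators in stochastic regression, with Lemma \ref{L-CVGSN} playing the role of the persistent excitation input. Writing $\wt{\vartheta}_n = \wh{\vartheta}_n - \vartheta$, I would first subtract $\vartheta$ from the recursive relation \eqref{LSVARTHETA} and use that $X_{n+1} - U_n = \vartheta^t\Phi_n + V_{n+1}$ by \eqref{EQFONDA}, which gives the estimation error recursion
$$
\wt{\vartheta}_{n+1} = \bigl( \rI_{p+2} - S_n^{-1}\Phi_n\Phi_n^t \bigr)\wt{\vartheta}_n + S_n^{-1}\Phi_n V_{n+1},
$$
so the error appears as a perturbed linear recursion driven by the martingale increments $S_n^{-1}\Phi_n V_{n+1}$, with $(V_n)$ a martingale difference sequence satisfying $\mathbb{E}[V_{n+1}^2\,|\,\cF_n] = \sigma^2$ a.s.

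Next I would exploit Lemma \ref{L-CVGSN}. Since $S_n/n \to \Lambda$ almost surely and $\Lambda$ is positive definite --- its determinant was shown above to be nonzero for every $|\rho| < 1$ --- the extreme eigenvalues of $S_n$ grow linearly, $\lambda_{\min}(S_n) \sim n\,\lambda_{\min}(\Lambda) \to \infty$ and $\lambda_{\max}(S_n) \sim n\,\lambda_{\max}(\Lambda)$, whence $\log\lambda_{\max}(S_n) \sim \log n = o(n) = o(\lambda_{\min}(S_n))$ almost surely. These are exactly the excitation hypotheses under which least squares is strongly consistent with an explicit rate.

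I would then invoke the classical almost sure convergence theorem for least squares in stochastic regression models (see e.g. \cite{BVIJC2010} and the references therein): since $(V_n)$ is a martingale difference sequence with bounded conditional variance and $\log\lambda_{\max}(S_n) = o(\lambda_{\min}(S_n))$, one has
$$
\parallel \wh{\vartheta}_n - \vartheta \parallel^2 = \mathcal{O}\!\left( \frac{\log\lambda_{\max}(S_n)}{\lambda_{\min}(S_n)} \right) \hspace{1cm}\text{a.s.}
$$
Plugging in the eigenvalue estimates of the previous paragraph gives $\parallel \wh{\vartheta}_n - \vartheta \parallel^2 = \mathcal{O}(\log n / n)$ a.s., which is \eqref{ASCVGVARTHETA}.

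For a self-contained proof rather than a citation, the substance would lie in controlling the martingale $M_n = \sum_{k=0}^{n-1} S_k^{-1}\Phi_k V_{k+1}$ through the strong law of large numbers for martingales, combined with the telescoping estimate $\sum_{k=0}^{n-1}\Phi_k^t S_k^{-1}\Phi_k = \mathcal{O}(\log\det S_n) = \mathcal{O}(\log n)$ and a Lyapunov function of the form $\wt{\vartheta}_n^t S_{n-1}\wt{\vartheta}_n$ controlled via the Robbins--Siegmund lemma. The hard part here would be this martingale bound; everything else is routine linear algebra. In any event, the genuine content of the statement --- the linear growth of $\lambda_{\min}(S_n)$, that is, the persistence of excitation ensured by the exogenous noise $(\xi_n)$ --- is already carried by Lemma \ref{L-CVGSN}, so relative to that lemma the present theorem is essentially a standard consequence.
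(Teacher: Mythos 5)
Your proposal is correct and follows essentially the same route as the paper: both express the estimation error through the martingale $M_n=\sum_{k=1}^n\Phi_{k-1}V_k$ (your recursion is just the unrolled form of the paper's identity $\wh{\vartheta}_n-\vartheta=S_{n-1}^{-1}(M_n+\wh{\vartheta}_0-\vartheta)$), invoke the strong law of large numbers for martingales to get the bound $\mathcal{O}(\log\lambda_{\max}(S_n)/\lambda_{\min}(S_n))$, and conclude via Lemma \ref{L-CVGSN} and the invertibility of $\Lambda$. Your closing remark that the real content lies in Lemma \ref{L-CVGSN} matches the paper's framing of that lemma as the keystone.
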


\begin{proof}
We deduce from \eqref{LSVARTHETA} and \eqref{DEFSN} that
\begin{equation}
\label{DEVARTHETA}
\wh{\vartheta}_{n}-\vartheta =S_{n-1}^{-1}\Bigl( M_n + \wh{\vartheta}_{0}-\vartheta\Bigr)
\end{equation}
where 
$$
M_n=\sum_{k=1}^n \Phi_{k-1}V_{k}.
$$
The sequence $(M_n)$ is a locally square-integrable $(p+2)$-dimensional martingale with increasing process
$$
\cM_n = \sigma^2 \sum_{k=0}^{n-1} \Phi_k \Phi_k^t.
$$
Then, it follows from the strong law of large numbers for martingales given e.g. in Theorem 4.3.16 of \cite{DUF1997} 
that
\begin{equation}
\label{SLLNVARTHETA}
\parallel \wh{\vartheta}_{n+1}-\vartheta \parallel ^{2}=\mathcal{O}\left( 
\frac{\log \lambda _{max}(S_{n})}{\lambda _{min}(S_{n})}\right) \hspace{1cm}
\text{a.s.}  
\end{equation}
Therefore, we clearly obtain \eqref{ASCVGVARTHETA} from \eqref{CVGSN} and \eqref{SLLNVARTHETA}.
\end{proof}

We immediately deduce from Theorem \ref{T-ASCVGVARTHETA} the almost sure convergence of
the least squares estimators $\wh{\theta}_{n}$ and $\wh{\rho}_{n}$ to $\theta$ and $\rho$.

\begin{cor}
\label{C-ASCVGTHETARHO}
Assume that $(V_n)$ has a finite conditional moment of order $>2$. 
Then, $\wh{\theta}_{n}$ and $\wh{\rho}_{n}$ both converge almost surely to $\theta$ and $\rho$,
\begin{equation}  
\label{ASCVGTHETA}
\parallel \wh{\theta}_{n}-\theta \parallel^{2}= \mathcal{O} 
\left( \frac{\log n}{n} \right) 
\hspace{1cm}\text{a.s.}
\end{equation}
\begin{equation}  
\label{ASCVGRHO}
 (\wh{\rho}_{n}-\rho)^{2}= \mathcal{O} 
\left( \frac{\log n}{n} \right) 
\hspace{1cm}\text{a.s.}
\end{equation}
\end{cor}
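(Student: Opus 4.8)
The plan is to derive both almost sure convergences directly from Theorem~\ref{T-ASCVGVARTHETA}, using only the explicit algebraic relations \eqref{NEWPARA}, \eqref{INIPARA}, \eqref{THETACHAP} and \eqref{DEFDELTAN} that link $\wh{\theta}_n$, $\wh{\rho}_n$ and $\wh{\vartheta}_n$. Nothing probabilistic beyond \eqref{ASCVGVARTHETA} is needed; the work is purely a continuity/boundedness argument.

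First I would dispose of \eqref{ASCVGRHO}. By construction $\wh{\rho}_n$ is the opposite of the last coordinate of $\wh{\vartheta}_n$, while by \eqref{NEWPARA} the last coordinate of $\vartheta$ is exactly $-\rho$. Hence $|\wh{\rho}_n-\rho|\leq \|\wh{\vartheta}_n-\vartheta\|$, and \eqref{ASCVGRHO} is immediate from \eqref{ASCVGVARTHETA}. Next, for $\wh{\theta}_n$ I would subtract $\theta=(\rI_p\ \ 0)\Delta\vartheta$, which is the first block of \eqref{INIPARA}, from the expression \eqref{THETACHAP} for $\wh{\theta}_n$, obtaining
\[
\wh{\theta}_n-\theta=(\rI_p\ \ 0)\Bigl((\wh{\Delta}_n-\Delta)\,\wh{\vartheta}_n+\Delta\,(\wh{\vartheta}_n-\vartheta)\Bigr).
\]
The second term is $\mathcal{O}(\sqrt{\log n/n})$ in norm by \eqref{ASCVGVARTHETA}, since $\Delta$ is a fixed matrix. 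For the first term, I would note from \eqref{DEFDELTA} and \eqref{DEFDELTAN} that every entry of $\wh{\Delta}_n-\Delta$ is either $0$ or of the form $\wh{\rho}_n^{\,k}-\rho^k$ for some $1\leq k\leq p-1$, and use the elementary factorization $\wh{\rho}_n^{\,k}-\rho^k=(\wh{\rho}_n-\rho)\sum_{j=0}^{k-1}\wh{\rho}_n^{\,j}\rho^{\,k-1-j}$. Since $\wh{\rho}_n\to\rho$ a.s. with $|\rho|<1$, the sequence $(\wh{\rho}_n)$ is a.s. bounded, so $\|\wh{\Delta}_n-\Delta\|=\mathcal{O}(|\wh{\rho}_n-\rho|)$ a.s.; and $\wh{\vartheta}_n$ is a.s. bounded as well, being convergent. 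Combining these bounds with \eqref{ASCVGRHO} gives $\|\wh{\theta}_n-\theta\|=\mathcal{O}(\sqrt{\log n/n})$ a.s., and squaring yields \eqref{ASCVGTHETA}.

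I do not expect a genuine obstacle here: the only step requiring a little care is the estimate $\|\wh{\Delta}_n-\Delta\|=\mathcal{O}(|\wh{\rho}_n-\rho|)$, where one must invoke the already established a.s. convergence of $\wh{\rho}_n$ towards a limit strictly inside the unit disk in order to keep the powers $\wh{\rho}_n^{\,j}$ bounded along almost every trajectory.
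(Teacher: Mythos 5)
Your proposal is correct and is precisely the deduction the paper has in mind: the authors state that the corollary is ``immediately deduced'' from Theorem \ref{T-ASCVGVARTHETA} and leave the details to the reader, and your argument---reading $\wh{\rho}_n-\rho$ off the last coordinate of $\wh{\vartheta}_n-\vartheta$, then splitting $\wh{\Delta}_n\wh{\vartheta}_n-\Delta\vartheta=(\wh{\Delta}_n-\Delta)\wh{\vartheta}_n+\Delta(\wh{\vartheta}_n-\vartheta)$ and bounding $\wh{\Delta}_n-\Delta$ via the factorization of $\wh{\rho}_n^{\,k}-\rho^k$---is exactly that routine verification, carried out correctly.
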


\noindent
{\bf Proof of Theorem \ref{T-ASCVGDW}.} The proof of Theorem \ref{T-ASCVGDW} relies on
Corollary \ref{C-ASCVGTHETARHO}. It is left to the reader inasmuch as it follows essentially the same lines as
those in Appendix C of \cite{BPV2012}.
\hfill
$\mathbin{\vbox{\hrule\hbox{\vrule height1ex \kern.5em\vrule height1ex}\hrule}}$

\section*{Appendix B}

\begin{center}
{\small PROOFS OF THE ASYMPTOTIC NORMALITY RESULTS}
\end{center}

\renewcommand{\thesection}{\Alph{section}} 
\renewcommand{\theequation}
{\thesection.\arabic{equation}} \setcounter{section}{2}  
\setcounter{equation}{0}


We shall now prove Theorem \ref{T-CLTDW}.
First of all, we obtain from \eqref{NEWLSRHO} that
\begin{equation}
\label{DEVRHOVER}
\overline{\rho}_n=\frac{I_n}{J_{n-1}}
\end{equation}
where
\begin{equation*} 
I_{n} =\sum_{k=1}^{n}\wh{\veps}_{k} \wh{\veps}_{k-1}
\hspace{1cm}\text{and}\hspace{1cm}
J_{n} =\sum_{k=0}^{n}\wh{\veps}_{k}^{\,  2}.
\end{equation*}
As in \cite{BPV2012}, we deduce from \eqref{DEVARTHETA} and \eqref{DEVRHOVER} the martingale decomposition
\begin{equation}  
\label{MAINDECOCLTTHETARHO}
\sqrt{n} \begin{pmatrix}
\ \wh{\vartheta}_{n} - \vartheta  \\
\ \overline{\rho}_{n} - \rho
\end{pmatrix}
 =\frac{1}{\sqrt{n}} \cA_n Z_n + \cB_n
\end{equation}
where $(Z_n)$ is the locally square-integrable $(p+3)$-dimensional martingale given by
\begin{equation*}
Z_n= 
\begin{pmatrix}
M_n  \\
N_n
\end{pmatrix}
\end{equation*}
with
$$
M_n = \sum_{k=1}^n \Phi_{k-1}V_{k}
\hspace{1cm}\text{and} \hspace{1cm}
N_n = \sum_{k=1}^{n}\veps_{k-1}V_{k}.
$$
In addition, it follows from Lemma \ref{L-CVGSN} that the sequences
$(\cA_n)$ and $(\cB_n)$ converge almost surely to $\cA$ and $\cB$ given by
$$
\cA= 
 \begin{pmatrix}
\Lambda^{-1} & \ \ 0_{p+2}  \vspace{1ex}\\
\sigma^{-2}(1- \rho^2) \cC^{t} & \sigma^{-2}(1- \rho^2)
\end{pmatrix},
\hspace{1cm}
\cB= \begin{pmatrix}
\ 0_{p+2} \ \vspace{1ex}\\
 0
\end{pmatrix}
$$
where $0_{p+2}$ stands for the null vector of $\dR^{p+2}$ and $\Lambda$ is the matrix given by \eqref{DEFLAMBDA}.
Moreover, the vector $\cC$ belongs to $\dR^{p+2}$ with
$$\cC=(1-\rho^2)  \Lambda^{-1} \nabla^{t} \rJ_p ^{t} T$$ 
where $\rJ_p=(\rI_p \ \ 0_p)$, $T$ is the vector of $\dR^{p}$ given by
$T^{t}=(1, \rho, \ldots,  \rho^{p-1} )$ and $\nabla$ is the rectangular matrix of size $(p+1)\!\times\!(p+2)$ given by
\begin{equation*}
\nabla=
\begin{pmatrix}
1 & 0 & \cdots & \cdots & \cdots & 0 & 1 \\ 
\rho & 1 & 0 & \cdots & \cdots & 0 & \rho - \xi_1 \\ 
\rho^2 & \rho & 1 & 0 & \cdots & 0 & \rho^2 -  \xi_2\\ 
\cdots & \cdots & \cdots & \cdots & \cdots & \cdots & \cdots \\ 
\rho^{p-1} & \rho^{p-2} & \cdots & \rho & 1 & 0 & \rho^{p-1} -\xi_{p-1}\\ 
0 & 0 & \cdots & \cdots & \cdots & 0 & -1
\end{pmatrix}
\end{equation*}
where, for all $1\leq k \leq p-1$, $\xi_k$ is the weighted sum
$$
\xi_k=\sum_{i=1}^{k}\rho^{k-i}\theta_i.
$$
We already saw that $(Z_n)$ is a martingale with predictable quadratic variation given, for all $n \geq 1$, by
\begin{equation*}
\langle Z \rangle_n=
\sigma^2 \sum_{k=0}^{n-1}
\begin{pmatrix}
\ \Phi_k \Phi_k^{t} \ & \ \Phi_k \veps_k \ \\
\ \Phi_k^{t}\veps_k \ & \  \veps_k^2 \ 
\end{pmatrix}.
\end{equation*}
Hence, we deduce once again from Lemma \ref{L-CVGSN} that
\begin{equation*}
\lim_{n\rightarrow \infty} \frac{1}{n} \langle Z \rangle_n= \cZ
\hspace{1cm}\text{a.s.}
\end{equation*}
where $\cZ$ is the positive-definite symmetric matrix given by
\begin{equation*}
\cZ=\sigma^4
\begin{pmatrix}
\sigma^{-2}\Lambda & \zeta  \vspace{1ex}\\
\zeta^{t}  & \eta
\end{pmatrix}
\end{equation*}
where $\zeta$ is the vector of $\dR^{p+2}$ such that
$\zeta^{t}=(1, \rho, \ldots, \rho^p, \varrho_p)$ with
$$
\varrho_p= - \eta \rho^2 - \sum_{i=1}^p  \rho^{i} \theta_{i}
\hspace{1cm} \text{and} \hspace{1cm} 
\eta = \frac{1}{1- \rho^2}.
$$
As $(Z_n)$ satisfies the Lindeberg condition, we deduce from the central limit theorem for multidimensional martingales 
given e.g. by Corollary 2.1.10 in \cite{DUF1997} that
\begin{equation*}
\frac{1}{\sqrt{n}} Z_{n} \liml \cN \Bigl(0, \cZ \Bigr)
\end{equation*}
which, via the martingale decomposition \eqref{MAINDECOCLTTHETARHO}
and Slutsky's lemma, leads to
\begin{equation}  
\label{MAINCLT}
\sqrt{n} \begin{pmatrix}
\ \wh{\vartheta}_{n} - \vartheta  \\
\ \overline{\rho}_{n} - \rho
\end{pmatrix}
 \liml  \cN \Bigl(0, \cA \cZ \cA^{\prime}\Bigr).
\end{equation}
Therefore, we immediately obtain from \eqref{MAINCLT} that
\begin{equation}
\label{CLTPRHO}
\sqrt{n}(\overline{\rho}_{n}-\rho)\liml
\cN\left(0,\tau^2\right)
\end{equation}
where the asymptotic variance $\tau^2$ is given by
$
\tau^2 =  (1-\rho^2)^{2} ( \sigma^{-2}\cC^{t} \Lambda \cC + 2 \cC^{t} \zeta + \eta ).
$
It follows from tedious but straighforward calculations that $\tau^2$ coincides with the
expansion given by \eqref{EXPTAU}. Finally, as
\begin{equation}
\label{DECOCLTDW}
\wh{D}_n - D = -2 (\overline{\rho}_n - \rho) + R_n
\end{equation}
where the remainder $R_n$ is negligeable which means that 
\begin{equation*}
R_n =  o\left(\frac{1}{\sqrt{n}} \right) 
\hspace{1cm}\text{a.s.}
\end{equation*}
we obtain \eqref{CLTDW} from \eqref{CLTPRHO} and \eqref{DECOCLTDW}, which
achieves the proof of  Theorem \ref{T-CLTDW}.
\hfill
$\mathbin{\vbox{\hrule\hbox{\vrule height1ex \kern.5em\vrule height1ex}\hrule}}$

\ \vspace{-1ex} \\
{\bf Acknowledgements.}
The authors would like to thanks the anonymous reviewers for their
constructive comments which helped to improve the paper substantially.

\end{document}